\documentclass[12pt,twoside]{amsart}
\usepackage{amssymb}
\usepackage{verbatim}
\usepackage{amsmath}
\usepackage{bm}
\usepackage{a4wide}
\usepackage[latin1]{inputenc}
\usepackage[T1]{fontenc}
\usepackage{times}
\usepackage{amssymb,latexsym}
\usepackage{enumerate}
\usepackage[stable]{footmisc}
\usepackage{color}
\usepackage[colorlinks,linkcolor=black,citecolor=black,urlcolor=black]{hyperref}

\usepackage{ulem}

\DeclareMathOperator{\Dom}{Dom}
\DeclareMathOperator{\Ran}{Ran}
\DeclareMathOperator{\Id}{Id}
\newcommand{\db}{\bar\partial}
\newcommand{\HH}{\mathcal H}
\newcommand{\BB}{\mathcal B}
\newcommand{\C}{\mathbb C}
\newcommand{\R}{\mathbb R}
\newcommand{\Z}{\mathbb Z}
\newcommand{\dd}{\mathrm d}
\newcommand{\ii}{\mathrm i}

\newcommand{\bea}{\begin{eqnarray*}}
\newcommand{\eea}{\end{eqnarray*}}

\makeatletter
\newcommand{\sumprime}{\if@display\sideset{}{'}\sum%
            \else\sum'\fi}
\makeatother

\begin{document}

\numberwithin{equation}{section}

\newtheorem{theorem}{Theorem}[section]
\newtheorem{proposition}[theorem]{Proposition}
\newtheorem{conjecture}[theorem]{Conjecture}
\def\theconjecture{\unskip}
\newtheorem{corollary}[theorem]{Corollary}
\newtheorem{lemma}[theorem]{Lemma}
\newtheorem{observation}[theorem]{Observation}
\newtheorem{definition}{Definition}
\numberwithin{definition}{section} 
\newtheorem{remark}[theorem]{Remark}
\def\theremark{\unskip}
\newtheorem{kl}{Key Lemma}
\def\thekl{\unskip}
\newtheorem{question}[theorem]{Question}
\def\thequestion{\unskip}
\newtheorem{example}[theorem]{Example}
\def\theexample{\unskip}
\newtheorem{problem}{Problem}

\thanks{The first author was supported by National Natural Science Foundation of China,  No. 12271101. The  second author was supported by National Natural Science Foundation of China,  No. 12671104 and National Key R\&D Program of China, No. 2024YFA1015200.}

\address [Bo-Yong Chen] {Department of Mathematical Sciences, Fudan University, Shanghai, 200433, China}
\email{boychen@fudan.edu.cn}

\address  [Jujie Wu ]{School of Mathematics (Zhuhai), Sun Yat-Sen University, Zhuhai, Guangdong 519082, P. R.
 	China}
\email{wujj86@mail.sysu.edu.cn}

\title{Infinite-Dimensionality of Bergman Spaces on\ Complete $d$-Bounded K\"ahler Manifolds}
\author{Bo-Yong Chen  and Jujie Wu}

\date{}
\dedicatory{}

\begin{abstract}

Let $M$ be a complete K\"ahler manifold of complex dimension $n\geq 1$ whose K\"ahler form is $d$-bounded in the sense of Gromov. We prove that its Bergman space, namely the Hilbert space of $L^2$ holomorphic $n$-forms, is infinite-dimensional. The proof combines Gromov's construction of auxiliary holomorphic line bundles, parameter-uniform $\db$ estimates, and $L^2$ methods with singular weights.

  \bigskip

  \noindent{{\sc Mathematics Subject Classification} (2020):  32A36, 32Q15, 32W05.}
  
  \smallskip

  \noindent{{\sc Keywords}: complete K\"ahler manifold; $d$-boundedness; Bergman space.}

\end{abstract}

\maketitle

\section{Introduction}

Let \((M,\omega)\) be a complete K\"ahler  manifold of complex dimension \(n\geq1\), and let \(L^2_{p,q}(M,\omega)\) denote the Hilbert space of square-integrable \((p,q)\)-forms with respect to the metric induced by \(\omega\) and the volume form \(dV_\omega=\omega^n/n!\). We take \(\bar\partial\) in its maximal closed realization and denote its Hilbert space adjoint by \(\bar\partial^*\). Set
\[
\mathcal H^{p,q}_{(2)}(M)=\mathcal H^{p,q}_{(2)}(M,\omega)
:=
\left\{
u\in\operatorname{Dom}\bar\partial\cap\operatorname{Dom}\bar\partial^*:
\bar\partial u=\bar\partial^*u=0
\right\}.
\]
Writing \(d=\partial+\bar\partial\), we define the Hodge and Dolbeault Laplacians by
\[
\Delta_d:=dd^*+d^*d,
\qquad
\Delta_{\bar\partial}
:=\bar\partial\bar\partial^*+\bar\partial^*\bar\partial.
\]
The K\"ahler identities give
$\Delta_d=2\Delta_{\bar\partial}$ on compactly supported smooth forms.
Since $(M,\omega)$ is complete, both Laplacians are essentially
self-adjoint on this domain, and their self-adjoint $L^2$ realizations
satisfy the same identity. Consequently,
$\mathcal H^{p,q}_{(2)}(M)$ coincides with the space of
$L^2$ Hodge harmonic forms of bidegree $(p,q)$.
On forms of type $(n,0)$, the target bidegree of $\db^*$ is $(n,-1)$, and hence $\db^*$ vanishes identically. Thus
\begin{equation}\label{eq:bergman}
 \HH^{n,0}_{(2)}(M)=\{u\in L^2_{n,0}(M):\db u=0\}.
\end{equation}
By local regularity, distributional holomorphicity implies holomorphicity in the usual sense. That is,
\begin{equation}\label{eq:bergman2}
 \HH^{n,0}_{(2)}(M) =\{u \in H^0(M,K_M): \|u\|_\omega <\infty\},
\end{equation}
where $K_M= \Lambda^{n,0}T^*M$ is the canonical bundle.
 Moreover, for $(n,0)$-forms,  the identity
\begin{eqnarray}\label{eq:invariance}
\|u\|_\omega^2
=
i^{\,n^2}\int_M u\wedge\overline u
\end{eqnarray}
shows that both square-integrability and the resulting Hilbert space structure are independent of the choice of \(\omega\). The intrinsic space \(\mathcal H^{n,0}_{(2)}(M)\) is usually called the {\it Bergman space} of \(M\).

\bigskip

A basic question is the following.
\begin{question}
Which complete K\"ahler manifolds have infinite-dimensional Bergman spaces?
\end{question}
Following Gromov~\cite{Gromov}, a complete K\"ahler metric $\omega$ is called {\it $d$-bounded} if there exists a globally defined smooth real one-form $\beta$ on $M$ such that
\begin{equation}\label{eq:dbounded}
 \omega=\dd\beta,\qquad \|\beta\|_{L^\infty(M,\omega)}<\infty.
\end{equation}
Here the norm is computed using the Riemannian metric associated with $\omega$. A compact K\"ahler manifold $(M,\omega)$ is called {\it K\"ahler hyperbolic} if the lift of $\omega$ to its universal cover $\widetilde M$ is $d$-bounded. Gromov proved that the Bergman space of the universal cover of a K\"ahler hyperbolic manifold is infinite-dimensional. More generally,
\[
 \dim_{\C}\HH^{p,q}_{(2)}(\widetilde M)=\infty,\qquad p+q=n.
\]
Starting from the $d$-bounded K\"ahler metric, Gromov introduced a family of connections $\{\nabla_t\}_{t\geq 0}$ on the trivial line bundle $\widetilde M\times\C$. These connections induce holomorphic line bundles $L_t$ with curvature $\ii F_t=t\omega$. He then obtained nonvanishing by combining his spectral gap estimates outside the middle degree with a twisted $L^2$ index theorem.

Without assuming a compact quotient, the first author claimed in~\cite{Chen} that the spaces of $L^2$ harmonic forms in all middle bidegrees are infinite-dimensional. That paper was subsequently retracted because of a gap in the proof of its main theorem. In the present paper, we give an independent proof for the bidegree $(n,0)$. We use Gromov's auxiliary holomorphic line bundles and compare the kernels for different parameters by means of parameter-uniform $\db$ estimates. We then construct arbitrarily many linearly independent sections using weights with logarithmic poles. The argument requires neither a compact quotient nor an $L^2$ index theorem.

\begin{theorem}\label{thm:main}
Let $(M,\omega)$ be a complete K\"ahler manifold of complex dimension $n\geq 1$. If $\omega$ is $d$-bounded, then the Bergman space of $M$ is infinite-dimensional.
\end{theorem}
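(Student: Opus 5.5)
We combine Gromov's twisted line bundles with Hörmander-type $L^2$ estimates carrying logarithmic singular weights. Write $\omega=\dd\beta$ with $|\beta|_\omega\le C_0$.

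\textbf{Twisted bundles.} For $t\ge 0$, equip the trivial bundle $M\times\C$ with the connection $\nabla_t=d-\ii t\beta$. Its curvature is $-\ii t\,\dd\beta$, so the $(0,1)$-part $\db_t=\db-\ii t\beta^{0,1}$ defines a holomorphic line bundle $L_t$ whose hermitian (trivial) metric has curvature form $t\omega$. Since $|\beta|$ is bounded, $\db_t-\db$ is a zeroth-order operator of norm $\le tC_0$. Consequently $\Dom\db_t=\Dom\db$ in $L^2$, and the $L^2$ spaces for different $t$ are the same Hilbert spaces.

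\textbf{Step 1: uniform estimates.} On the complete Kähler manifold $M$, apply Hörmander/Demailly's $L^2$ estimate to $(n,1)$-forms with values in $L_t$, whose curvature is $t\omega$. For any $\db_t$-closed $v\in L^2_{n,1}$ and any plurisubharmonic weight $\varphi$, there exists $u$ with
\[
\db_t u=v,\qquad \int_M|u|^2e^{-\varphi}\le \frac1t\int_M|v|^2e^{-\varphi}.
\]
The estimate holds with singular $\varphi$ by the standard regularization on complete manifolds.

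\textbf{Step 2: many $L_t$-holomorphic sections.} Fix $N$ distinct points $x_1,\dots,x_N$ and a large $t$. Choose local holomorphic frames of $L_t$ near each $x_j$; these exist because $\beta^{0,1}$ is locally $\db$-exact, with gauge functions $e^{\ii t\psi_j}$. Take local holomorphic $n$-forms $f_j$ equal to the frame times $dz$, multiplied by cutoffs $\chi_j$ supported in balls of a fixed radius $r$ (independent of $t$). Use the weight $\varphi=\sum_j 2n\,\chi_j\log|z-x_j|$, corrected by a bounded term to restore plurisubharmonicity. The negativity of $i\partial\db\varphi$ is bounded by $C\omega$, and is absorbed because the available curvature is $t\omega$ with $t\gg C$. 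Solve $\db_t u=\db_t(\chi_k f_k)$; the right side is supported where $\varphi$ is bounded. The non-integrability of $e^{-\varphi}$ forces $u(x_j)=0$ for all $j$, so $s_k=\chi_kf_k-u$ is an $L^2$ $\db_t$-holomorphic $n$-form with $s_k(x_j)=\delta_{jk}\cdot(\text{nonzero})$. This gives $\dim\ker\db_t\cap L^2_{n,0}\ge N$ for $t\ge t_0(N)$. The threshold $t_0$ depends on the local geometry near the $x_j$, which is harmless since we may choose the points inside a fixed compact set.

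\textbf{Step 3: transfer to $t=0$ (main obstacle).} We must pass from kernels of $\db_t$ to the kernel of $\db$, i.e.\ Bergman forms. My first attempt is a continuity/deformation argument in $t$: the dimension of $\ker\db_t$ on $L^2_{n,0}$ cannot drop as $t$ decreases. Here the curvature $t\omega$ gives a spectral gap, since $\db_t\db_t^*\ge t$ on $(n,1)$-forms by Bochner–Kodaira, giving closed range of $\db_t$ on $(n,0)$-forms with constant $1/t$. Because $\|\db_t-\db_s\|\le|t-s|C_0$, if $\ker\db_s$ contained fewer than $N$ independent elements, we would project $s_k$ and correct by solving $\db_s w=-\ii(s-t)\beta^{0,1}\wedge s_k$ with $\|w\|\le |t-s|C_0\|s_k\|/\sqrt{s}$. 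This preserves linear independence for $|t-s|$ small relative to $\sqrt s$. Iterating downward along a geometric sequence $t_{m+1}=t_m(1-c)$ keeps the total error summable, but the estimate degenerates as $s\to0$. To get down to $s=0$, I would instead use a weight trick: $e^{-\varphi}$ with $\varphi$ a bounded function whose $i\partial\db\varphi\ge \epsilon\omega$. Such a function does not exist in general, so the fallback is to twist by $e^{\ii t\beta}$-type gauge only along paths and use the parameter-uniform estimate $\|u\|\le C\|v\|$ independent of small $t$.

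This uniformity in $t$ near $0$ is exactly where I expect the difficulty. Gromov's gap $\lambda_0>0$ for $(n,1)$-forms on $d$-bounded manifolds is the key input. It comes from $\omega=\dd\beta$ bounded, which gives $\|\alpha\|^2\le C\|\alpha\|(\|d\alpha\|+\|d^*\alpha\|)$ for $\alpha$ outside the middle degree, so $\Delta\ge \lambda_0>0$ on $(n,1)$-forms. This makes $\db$ on $(n,0)$-forms have closed range with constant $\lambda_0^{-1/2}$, uniformly for small $t$. With this uniform gap, a single correction step from $t$ to $0$ with $tC_0\lambda_0^{-1/2}<1/2$ preserves the independence of the $N$ forms (after Gram–Schmidt normalization with separation controlled by the point evaluations). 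Combined with Step 2, now run at small $t$ with the gap replacing curvature, this yields $\dim\HH^{n,0}_{(2)}(M)\ge N$ for every $N$.
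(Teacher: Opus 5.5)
Your Steps 1 and 2 are sound. Poles $n\log|z-x_j|^2$ at $N$ distinct points are an equally good substitute for the paper's single pole of order $n+N$. The transfer mechanism is also correct. If $\|(\Id-P_0)u\|\le C\|\db u\|$, then for $h\in\BB_t$ one gets $\|(\Id-P_0)h\|\le C\|\db h\|=Ct\|\beta^{0,1}\wedge h\|\le CtC_0\|h\|$. So $P_0$ is injective on all of $\BB_t$ once $CtC_0<1$, with no need for Gram--Schmidt or point evaluations.

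The gap is in how you combine the two, because they need opposite ranges of $t$.
\begin{itemize}
\item Step 2 only works for $t\ge t_0(N)$. Here $t$ enters pointwise: $t\omega+\ii\partial\db\varphi$ must be positive on the annuli where $\chi_j\log|z-x_j|$ fails to be plurisubharmonic. That negativity is unavoidable: a plurisubharmonic function with a logarithmic pole cannot be constant near the boundary of the ball, since its circle means are convex in $\log r$. It also grows with $N$. The paper's weight $(n+N)\phi_\epsilon$ forces a threshold linear in $N$, and in your construction $N$ disjoint balls in a fixed compact set must shrink.
\item Your single correction step requires $t<\lambda_0^{1/2}/(2C_0)$.
\end{itemize}
You bridge this by running Step 2 at small $t$ with the gap replacing curvature, and that fails. $\lambda_0$ is an estimate for the unweighted $L^2$ norm and gives nothing in $L^2(e^{-\varphi})$ with $\varphi$ singular. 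The weighted adjoint differs from the unweighted one by contraction with $\partial\varphi$, which is unbounded near the poles. With only an unweighted estimate, the minimal correction of $\chi_kf_k$ just returns the projection $P_t(\chi_kf_k)$. That projection carries no information about the values at the $x_j$ and may vanish.

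What is missing is an estimate $\|(\Id-P_s)u\|\le C\|\db_su\|$ with $C$ independent of $s$ on the whole range $[0,t_0(N)]$. With it, you descend from $t_0(N)$ to $0$ in finitely many steps of a fixed length $h$ with $CC_0h<1$, composing the injections $P_s|_{\BB_t}$. You already hold the ingredients. Put $A_sv=(\db_s^*v,\db_sv)$ on $(n,1)$-forms.
\begin{itemize}
\item For $s\ge s_0$, curvature gives $\|A_sv\|\ge\sqrt{s_0}\,\|v\|$.
\item For $s\le s_0$ with $s_0$ small, Gromov's gap survives the bounded perturbation $\|(A_s-A_0)v\|\le Bs\|v\|$.
\end{itemize}
The paper does not even need Gromov's theorem. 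Bochner--Kodaira--Nakano at $t+\delta$ plus perturbation gives $\|A_tv\|\ge(\sqrt{t+\delta}-B\delta)\|v\|$. Taking $\delta=1/(4B^2)$ yields $\|A_tv\|\ge\|v\|/(4B)$ for every $t\ge0$, including $t=0$.

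Your geometric descent stalls near $0$ only because it uses the constant $s^{-1/2}$ all the way down. With a uniform constant, Step 2 is run at the large parameter where it is valid, and the passage to $\BB_0$ takes finitely many steps.
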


The metric independence in (\ref{eq:invariance}) also gives the following obstruction to the existence of a complete $d$-bounded K\"ahler metric.
\begin{corollary}\label{cor:pluripolar}
Let $M=\widehat M\setminus E$, where $\widehat M$ is a compact complex manifold of dimension $n\geq 1$ and $E\subset\widehat M$ is a closed locally pluripolar set. Then $M$ admits no complete $d$-bounded K\"ahler metric. In particular, no positive-dimensional smooth quasi-projective manifold admits such a metric.
\end{corollary}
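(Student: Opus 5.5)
Suppose, for contradiction, that $M=\widehat M\setminus E$ carries a complete $d$-bounded K\"ahler metric $\omega$. We may take $M$ connected, as is implicit in the setting of Theorem~\ref{thm:main}. Since $E$ is closed and locally pluripolar, it has empty interior and Lebesgue measure zero, so $M$ is open and dense in $\widehat M$. By Theorem~\ref{thm:main}, $\HH^{n,0}_{(2)}(M)$ is infinite-dimensional. The plan is to show that restriction gives an injection
\[
H^0(\widehat M,K_{\widehat M})\to \HH^{n,0}_{(2)}(M)
\]
that is also surjective, and then to use the finite-dimensionality of $H^0(\widehat M,K_{\widehat M})$ for compact $\widehat M$ to get a contradiction.

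\emph{Local form of the norm.} Let $u\in\HH^{n,0}_{(2)}(M)$. By (\ref{eq:invariance}), $\|u\|_\omega^2=i^{n^2}\int_M u\wedge\bar u$, and this quantity does not involve $\omega$. Cover $\widehat M$ by finitely many coordinate balls $U_j$ with coordinates $z$, and write $u=f_j\,dz_1\wedge\cdots\wedge dz_n$ on $U_j\setminus E$. Then $i^{n^2}u\wedge\bar u$ is a positive constant times $|f_j|^2$ times Lebesgue measure. Hence each $f_j$ is holomorphic on $U_j\setminus E$ and lies in $L^2(U_j\setminus E)$.

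\emph{Removability (main step).} This is the step I expect to be the main obstacle: showing that a closed pluripolar set $E\cap U_j$ is removable for $L^2$ holomorphic functions. This is classical (Siciak, Sadullaev), and I would cite it, sketching the argument as follows.
\begin{enumerate}[(i)]
\item Since $E$ has measure zero, $f_j$ defines an $L^2_{\rm loc}$ function on $U_j$.
\item It suffices to show $\db f_j=0$ in the sense of distributions on $U_j$; Weyl's lemma then makes $f_j$ holomorphic.
\item To test this, use cutoffs $\chi_\varepsilon$ that vanish near $E$, equal $1$ off shrinking neighbourhoods of $E$, and satisfy $\|\db\chi_\varepsilon\|_{L^2}\to0$ locally. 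Such cutoffs exist because a closed pluripolar set has zero capacity; one can build them from a local plurisubharmonic $\varphi$ with $E\subset\{\varphi=-\infty\}$, e.g. $\chi_\varepsilon=\rho(\varepsilon\log(-\varphi))$-type truncations.
\item For a test form $\psi$, $\langle f_j,\db^*\psi\rangle=\lim_{\varepsilon\to0}\langle f_j,\db^*(\chi_\varepsilon\psi)\rangle$. The error term is bounded by $\|f_j\|_{L^2}\,\|\psi\,\db\chi_\varepsilon\|_{L^2}$, which tends to $0$.
\end{enumerate}
Consequently $u$ extends to a global holomorphic section $\hat u\in H^0(\widehat M,K_{\widehat M})$. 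The extension is unique because $M$ is dense, so restriction is a bijection between $H^0(\widehat M,K_{\widehat M})$ and $\HH^{n,0}_{(2)}(M)$.

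\emph{Conclusion.} Since $\widehat M$ is compact, $H^0(\widehat M,K_{\widehat M})$ is finite-dimensional, either by Cartan--Serre or by Montel's theorem applied to the unit ball of the sup-norm. This contradicts the infinite-dimensionality given by Theorem~\ref{thm:main}.

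\emph{Quasi-projective case.} If $M$ is a smooth quasi-projective manifold of positive dimension, Hironaka's resolution gives a smooth projective compactification $\widehat M$ with $M=\widehat M\setminus E$, where $E$ is a proper analytic subset. A proper analytic subset is closed and locally pluripolar, since locally $E\subset\{\log|h|=-\infty\}$ for some holomorphic $h\not\equiv0$. The first part therefore applies.
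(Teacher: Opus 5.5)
Your proposal is correct and follows the same route as the paper. Both apply Theorem~\ref{thm:main}, use the metric-independent norm \eqref{eq:invariance} and the removability of closed locally pluripolar sets for $L^2$ holomorphic functions to identify $\HH^{n,0}_{(2)}(M)$ with $H^0(\widehat M,K_{\widehat M})$, and conclude from finite-dimensionality on the compact $\widehat M$; your cutoff sketch with $\rho(\varepsilon\log(-\varphi))$ and your Hironaka argument for the quasi-projective case fill in details the paper leaves implicit.
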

\begin{proof}
Suppose that $M$ admits a complete $d$-bounded K\"ahler metric. By Theorem~\ref{thm:main}, 
$$
\dim_{\C}\HH^{n,0}_{(2)}(M)=\infty.
$$
 Since closed locally pluripolar sets are removable for $L^2$ holomorphic functions, restriction gives an identification
\[
 \HH^{n,0}_{(2)}(M)\simeq H^0(\widehat M,K_{\widehat M}),
\]
where $K_{\widehat M}$ is the canonical bundle of $\widehat M$. The space on the right is finite-dimensional, a contradiction.
\end{proof}
\begin{question}
Under the hypotheses of Theorem~\ref{thm:main}, does an orthonormal basis of $\HH^{n,0}_{(2)}(M)$ define a holomorphic embedding into infinite-dimensional complex projective space?
\end{question}
By Kobayashi's classical paper~\cite{Kobayashi}, an affirmative answer would imply that the Bergman metric on $M$ exists.

The following question remains particularly challenging.
\begin{question}
Under the hypotheses of Theorem~\ref{thm:main}, is it true that
\[
 p+q=n\quad\Longrightarrow\quad\dim_{\C}\HH^{p,q}_{(2)}(M)=\infty?
\]
\end{question}
 
\section{A family of auxiliary holomorphic line bundles}\label{sec:hlb}


We introduce a family of connections $\{\nabla_t\}_{t\geq 0}$ on the trivial line bundle $L=M\times\C$. They induce holomorphic line bundles $L_t$ with curvature $\ii F_t=t\omega$. This construction is due to Gromov~\cite{Gromov}. To make it accessible to readers outside complex geometry, we provide a self-contained account.

The bundle $L$ has a fixed global frame
\[
 e_0(x)=(x,1).
\]
Every smooth section has a unique expression $s=fe_0$, where $f$ is an ordinary smooth complex-valued function. Thus one may initially regard a section as a complex-valued function accompanied by a fixed basis vector. A nowhere-vanishing section supplies a basis in each fiber and is called a {\it frame}. We fix the standard Hermitian metric by requiring
\[
 |e_0|=1,\qquad |fe_0|=|f|.
\]
This Hermitian metric specifies an inner product and a norm on each complex line fiber.

A connection is a rule for differentiating sections. The simplest such rule is
\[
 \dd(fe_0)=(\dd f)e_0.
\]
We modify it by setting
\begin{equation}\label{eq:connection}
 \nabla_t(fe_0)=(\dd f-\ii t f\beta)e_0,\qquad t\geq 0.
\end{equation}
Differentiation along a real tangent vector $X$ then gives
\[
 (\nabla_t)_X(fe_0)=\bigl(Xf-\ii t\beta(X)f\bigr)e_0.
\]
Thus $\nabla_t$ is the ordinary derivative plus a correction proportional to the function itself. It still satisfies the Leibniz rule
\[
 \nabla_t(as)=\dd a\otimes s+a\nabla_t s.
\]
In complex coordinates $z_1,\ldots,z_n$, the differential splits into two parts:
\[
 \dd f=
 \underbrace{\sum_j\frac{\partial f}{\partial z_j}\dd z_j}_{\partial f}
 +\underbrace{\sum_j\frac{\partial f}{\partial\bar z_j}\dd\bar z_j}_{\db f}.
\]
An ordinary function $f$ is holomorphic if and only if $\db f=0$. Similarly, a one-form decomposes as
\[
 \beta=\beta^{1,0}+\beta^{0,1},
\]
where $\beta^{0,1}$ is the part involving $\dd\bar z_j$. Taking the $(0,1)$ part of~\eqref{eq:connection}, we obtain
\begin{equation*}\label{eq:dbt}
 \db_t(fe_0)=(\db f-\ii t f\beta^{0,1})e_0.
\end{equation*}
We use this operator to define a new holomorphicity condition:
\begin{equation*}\label{eq:holomorphic}
 fe_0\text{ is a holomorphic section of }L_t
 \quad\Longleftrightarrow\quad\db f=\ii t f\beta^{0,1}.
\end{equation*}
When $t=0$, this is the ordinary Cauchy--Riemann equation. For $t>0$, a section $fe_0$ may be holomorphic with respect to $L_t$ without its coefficient $f$ being holomorphic in the usual sense.

We next explain why $L_t$ is a well-defined holomorphic line bundle. Seek a nowhere-vanishing local section $e_t=e^g e_0$ satisfying $\db_t e_t=0$. A direct calculation gives
\[
 \db_t(e^g e_0)=e^g(\db g-\ii t\beta^{0,1})e_0.
\]
It therefore suffices to solve locally
\begin{equation}\label{eq:g}
 \db g=\ii t\beta^{0,1}.
\end{equation}
The compatibility condition holds: since $\dd\beta=\omega$ and $\omega$ has type $(1,1)$,
\[
 \db\beta^{0,1}=(\dd\beta)^{0,2}=0.
\]
The local $\db$-Poincar\'e lemma states that a smooth $\db$-closed $(0,1)$-form is the $\db$ of a smooth function on a sufficiently small coordinate neighborhood. It therefore supplies a solution of~\eqref{eq:g}.

Once $e_t$ is chosen, every local section can be written as $s=ae_t$. The Leibniz rule gives
\[
 \db_t(ae_t)=(\db a)e_t+a\db_t e_t=(\db a)e_t.
\]
Consequently,
\begin{equation*}\label{eq:local-holomorphic}
 ae_t\text{ is holomorphic in }L_t
 \quad\Longleftrightarrow\quad a\text{ is an ordinary holomorphic function}.
\end{equation*}
Such an $e_t$ is called a {\it local holomorphic frame}: in this frame, holomorphicity of a section is equivalent to ordinary holomorphicity of its coefficient.

Suppose that on the intersection of two neighborhoods the frames satisfy
\begin{equation}\label{eq:transition}
 e_{t,j}=a_{ij}e_{t,i}.
\end{equation}
On a triple intersection,
\[
 a_{ik}e_{t,i}=e_{t,k}=a_{jk}e_{t,j}=a_{jk}a_{ij}e_{t,i},
\]
so $a_{ik}=a_{ij}a_{jk}$. Applying $\db_t$ to~\eqref{eq:transition} also yields $\db a_{ij}=0$. Thus the changes of frame are nowhere-vanishing holomorphic functions satisfying the cocycle condition, precisely the local gluing data for a holomorphic line bundle.

Finally, we compute the curvature of $L_t$. Relative to the fixed smooth unit frame $e_0$, let
\[
 e_t=e^g e_0,\qquad \db g=\ii t\beta^{0,1}
\]
be a local holomorphic frame. Write its Hermitian metric as $|e_t|^2=e^{-\varphi_t}$. Since $|e_0|=1$,
\[
 e^{-\varphi_t}=e^{g+\bar g},\qquad \varphi_t=-(g+\bar g).
\]
Because $\beta$ and $t$ are real, conjugating $\db g=\ii t\beta^{0,1}$ gives
\[
 \partial\bar g=-\ii t\beta^{1,0}.
\]
Using $\partial\db=-\db\partial$ and $\dd\beta=\omega$, we obtain
\begin{align*}
 \ii\partial\db\varphi_t
 &=-\ii\partial\db g-\ii\partial\db\bar g\\
 &=-\ii\partial(\db g)+\ii\db(\partial\bar g)\\
 &=t\partial\beta^{0,1}+t\db\beta^{1,0}\\
 &=t(\dd\beta)^{1,1}\\
 &=t\omega.
\end{align*}

\section{Proof of the main theorem}
We retain the notation $L_t$, $\nabla_t$, and $\db_t$ from Section~\ref{sec:hlb}. In particular,  $\ii F_t=t\omega$.  Using the fixed smooth unit frame, we identify the spaces for all parameters with
\[
 H^q=L^2\bigl(M,\Lambda^{n,q}T^*M\otimes L\bigr),\qquad L=M\times\C.
\]
All unmarked norms below are the corresponding $L^2$ norms.
Every $\db_t$ is given its maximal closed extension, namely
\[
 \Dom\db_t=\{u\in H^q:\db_tu\in H^{q+1}\text{ in the distributional sense}\}.
\]
Let $W$ denote exterior multiplication by $\beta^{0,1}$. Since $\beta$ is bounded, $W$ is bounded, and
\begin{equation}\label{eq:operator-difference}
 \db_t-\db_s=-\ii(t-s)W,
 \qquad
 \db_t^*-\db_s^*=\ii(t-s)W^*.
\end{equation}
In particular, the domains of $\db_t$ and of $\db_t^*$ are independent of $t$. On $H^1$, $\db_t^*$ is the adjoint of $\db_t:H^0\to H^1$, whereas $\db_t$ maps $H^1$ to $H^2$, with $H^2=\{0\}$ when $n=1$.

Put
\begin{equation*}\label{eq:kernel}
 \BB_t:=\ker(\db_t:H^0\longrightarrow H^1),
 \qquad
 P_t:H^0\longrightarrow\BB_t
\end{equation*}
for the closed holomorphic kernel and its orthogonal projection. Local regularity identifies $\BB_t$ with the $L^2$ holomorphic sections of $K_M\otimes L_t$; in particular, from (\ref{eq:bergman}) we have
\begin{equation*}\label{eq:kernel-zero}
 \BB_0=  \HH^{n,0}_{(2)}(M).
\end{equation*}

We recall the curvature estimate that will be used below. For an $L_t$-valued smooth compactly supported $(n,1)$-form $v$, the Bochner--Kodaira--Nakano identity gives
\begin{equation*}\label{eq:bkn}
 \|\db_t^*v\|^2+\|\db_tv\|^2
 \geq\bigl([\ii F_t,\Lambda_\omega]v,v\bigr)
 =t\|v\|^2.
\end{equation*}
Here $\Lambda_\omega$ is the adjoint of exterior multiplication by $\omega$. The last equality follows from
$[\omega\wedge,\Lambda_\omega]=(p+q-n)\Id$ on $(p,q)$-forms.

Completeness provides real cutoff functions $\chi_j\in C_c^\infty(M)$
such that $0\leq\chi_j\leq1$, $\chi_j\to1$ locally uniformly, and
$\|\dd\chi_j\|_{L^\infty}\to0$. Applying these cutoffs and then
local regularization shows that compactly supported smooth forms
are dense in 
\[
 \mathcal D:=\Dom\db_t^*\cap\Dom\db_t\subset H^1
\]
 for the norm
\[
 \bigl(\|v\|^2+\|\db_t^*v\|^2+\|\db_tv\|^2\bigr)^{1/2};
\]
 Consequently,
\begin{equation}\label{eq:bkn2}
 \|\db_t^*v\|^2+\|\db_tv\|^2
 \geq  t\|v\|^2, \quad v\in\mathcal D,\, t\geq0.
\end{equation}
see \cite[\S\S4--5]{Demailly}. This is the point at which completeness enters the global integration-by-parts argument.


\begin{proposition}\label{prop:uniform}
There exists a constant $C>0$, independent of\/ $t\geq0$, such that
\begin{equation}\label{eq:uniform}
 \|(\Id-P_t)u\|\leq C\|\db_tu\|,
 \qquad u\in\Dom(\db_t:H^0\longrightarrow H^1).
\end{equation}
\end{proposition}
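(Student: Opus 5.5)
The plan is to reduce \eqref{eq:uniform} to a single lower bound on $(n,1)$-forms that is uniform in $t$. We aim to show that there is $c_0>0$, independent of $t\geq 0$, with
\begin{equation*}
 \|\db_t^*v\|^2+\|\db_tv\|^2\geq c_0\|v\|^2,\qquad v\in\mathcal D,\ t\geq0 .
\end{equation*}
Granting this, a standard Hilbert space argument gives \eqref{eq:uniform} with $C=c_0^{-1/2}$. For $v\in\overline{\Ran(\db_t:H^0\to H^1)}\cap\Dom\db_t^*$ we have $\db_tv=0$, and hence $\|\db_t^*v\|\geq\sqrt{c_0}\|v\|$. So $\db_t^*$ has closed range, and therefore so does $\db_t:H^0\to H^1$. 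By the closed range theorem, with the same constant, $\|\db_tu\|\geq\sqrt{c_0}\|u\|$ for $u\perp\BB_t$. Applying this to $(\Id-P_t)u$, and noting that $\db_t(\Id-P_t)u=\db_tu$, yields \eqref{eq:uniform}.

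To get the uniform bound we treat large and small $t$ separately. For $t\geq t_0$, \eqref{eq:bkn2} already gives the bound with constant $t_0$. For small $t$ we use $d$-boundedness at $t=0$, where $L$ is trivial and $\db_0=\db$ acts on scalar $(n,1)$-forms. Gromov's spectral gap lemma states that on a complete manifold with $\omega=\dd\beta$ and $\|\beta\|_\infty<\infty$ one has
\[
 \|\dd v\|^2+\|\dd^*v\|^2\geq\lambda\|v\|^2
\]
for compactly supported forms $v$ of degree $k\neq n$, with $\lambda>0$ depending only on $n$ and $\|\beta\|_\infty$. Its proof writes $v=\omega\wedge\gamma$ via the Lefschetz isomorphism, uses $\omega\wedge\gamma=\dd(\beta\wedge\gamma)\pm\beta\wedge\dd\gamma$, and bounds the pairing using $\|\beta\|_\infty$. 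We apply this in degree $n+1$. The Kähler identity $\Delta_d=2\Delta_{\db}$ on compactly supported forms, together with the density statement preceding \eqref{eq:bkn2}, then gives
\[
 \|\db^*v\|^2+\|\db v\|^2\geq\tfrac{\lambda}{2}\|v\|^2,\qquad v\in\mathcal D .
\]

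Next we perturb in $t$ using \eqref{eq:operator-difference}. Since $\|W\|\leq\|\beta\|_\infty$, we have
\[
 \|\db_t^*v\|+\|\db_tv\|\geq\|\db^*v\|+\|\db v\|-2t\|\beta\|_\infty\|v\|\geq\Bigl(\sqrt{\lambda/2}-2t\|\beta\|_\infty\Bigr)\|v\|.
\]
This is at least $\tfrac12\sqrt{\lambda/2}\,\|v\|$ for $t\leq t_0:=\sqrt{\lambda/2}/(4\|\beta\|_\infty)$. Squaring and combining with the large-$t$ case gives
\[
 c_0=\min\bigl(t_0,\lambda/16\bigr)
\]
(the factor accounts for passing from the sum of norms to the sum of squares).

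The main obstacle is the $t=0$ spectral gap, since at $t=0$ curvature gives nothing and \eqref{eq:bkn2} is empty. This is the only place where $d$-boundedness, rather than just completeness, is used. I would first try to invoke Gromov's lemma directly, checking that completeness lets the integration by parts in $\dd(\beta\wedge\gamma)$ pass from compactly supported forms to $\mathcal D$ via the cutoffs $\chi_j$. I would also check that the Lefschetz inverse $\gamma$ is pointwise bounded by $|v|$ uniformly, so that $\lambda$ depends only on $n$ and $\|\beta\|_\infty$.
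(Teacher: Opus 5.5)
Your argument is correct, but it obtains the uniform coercivity bound on $(n,1)$-forms by a different route than the paper. You get the gap at $t=0$ from Gromov's spectral gap theorem in degree $n+1$, converted to $\db$ via $\Delta_d=2\Delta_{\db}$ and density. You then perturb forward to $0\leq t\leq t_0$ and use \eqref{eq:bkn2} for $t\geq t_0$. The paper needs no input at $t=0$ at all. With $A_tv=(\db_t^*v,\db_tv)$ and $\delta=1/(4B^2)$, it perturbs \emph{backward} from a positive parameter:
\[
\|A_tv\|\geq\|A_{t+\delta}v\|-\|(A_{t+\delta}-A_t)v\|\geq\bigl(\sqrt{t+\delta}-B\delta\bigr)\|v\|\geq\frac{1}{4B}\|v\| .
\]
This treats all $t\geq0$ at once. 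So the obstacle you single out is not a real one. The curvature lower bound grows like $\sqrt t$, while the perturbation is only $O(t)$. Hence the twisted Bochner--Kodaira--Nakano inequality at $t=\delta$ already forces a gap at $t=0$. This in effect reproves Gromov's lemma in bidegree $(n,1)$, with explicit constant $C=4B$ and $B$ controlled by $\|\beta\|_\infty$. It also shows that your remark that the $t=0$ gap is ``the only place'' $d$-boundedness is used is inaccurate. Your perturbation step already uses $\|W\|\leq\|\beta\|_\infty$, and the paper shows that this bound, together with the curvature identity $\ii F_t=t\omega$, suffices. Your version makes the link with Gromov's $L^2$ Hodge theory explicit, at the price of citing that theorem. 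If you prove it along your sketch, bounding the term $\beta\wedge\dd\gamma$ needs $\|\dd\gamma\|$ controlled by $\|\dd v\|+\|\dd^*v\|$, which follows from $[\Delta_d,L]=0$. The final step is equivalent in the two proofs. Your closed-range argument, using equality of the reduced minimum moduli of $T$ and $T^*$, does the same job as the paper's appeal to H\"ormander's existence lemma. There one produces $w$ with $\db_tw=\db_tu$ and $\|w\|\leq4B\|\db_tu\|$, and then uses contractivity of $\Id-P_t$. Both arguments need $\Ran T\subset\ker S$, which the paper verifies from $\db\beta^{0,1}=0$ and $\beta^{0,1}\wedge\beta^{0,1}=0$.
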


\begin{proof}
For $v\in\mathcal D$, write
\[
 A_tv:=(\db_t^*v,\db_tv)\in H^0\oplus H^2.
\]
By \eqref{eq:operator-difference}, there is a constant $B>0$, depending only on the bound for $\beta$ and the dimension, such that
\begin{equation}\label{eq:perturbation}
 \begin{split}
 \|(\db_t-\db_s)u\|&\leq B|t-s|\,\|u\|,\qquad u\in H^0,\\
 \|(A_t-A_s)v\|&\leq B|t-s|\,\|v\|,\qquad v\in\mathcal D.
 \end{split}
\end{equation}
In the first line, the operator difference is understood as its bounded extension to $H^0$.
From (\ref{eq:bkn2}) we have
\begin{equation}\label{eq:bkn3}
 \|A_tv\|^2\geq t\|v\|^2,
 \qquad v\in\mathcal D,\, t\geq 0.
\end{equation}

Set
$
 \delta=\frac1{4B^2}.
$
Since the domain $\mathcal D$ is independent of the parameter,
\eqref{eq:perturbation} and \eqref{eq:bkn3} imply that, for every
$t\geq0$ and $v\in\mathcal D$,
\begin{equation*}\label{eq:gap}
 \begin{aligned}
 \|A_tv\|
 &\geq \|A_{t+\delta}v\|
       -\|(A_{t+\delta}-A_t)v\|\\
 &\geq \bigl(\sqrt{t+\delta}-B\delta\bigr)\|v\|\\
 &\geq \bigl(\sqrt{\delta}-B\delta\bigr)\|v\|
  =\frac1{4B}\|v\|.
 \end{aligned}
\end{equation*}
Thus
\begin{equation}\label{eq:uniform-coercivity}
 \|\db_t^*v\|^2+\|\db_tv\|^2
 \geq\frac1{16B^2}\|v\|^2,
 \qquad v\in\mathcal D,\, t\geq0.
\end{equation}

Fix $t\geq0$ and write
\[
 T=\db_t:H^0\longrightarrow H^1,
 \qquad
 S=\db_t:H^1\longrightarrow H^2.
\]
These operators are densely defined and closed. Set $\alpha=\beta^{0,1}$.   Since
$\db\alpha=(\dd\beta)^{0,2}=0$ and $\alpha\wedge\alpha=0$,
we have
\[
 \db_t^2u
 =-\ii t\,\db\alpha\wedge u
   -t^2\alpha\wedge\alpha\wedge u
 =0
\]
distributionally. By maximality of $S$, this gives
$\Ran T\subset\ker S$.
The standard $L^2$ existence lemma for closed Hilbert complexes (cf.  \cite[Theorem~1.1.2]{Hormander}),
applied to \eqref{eq:uniform-coercivity}, now implies that every
$f\in\ker S$ admits a solution $w\in\Dom T$ satisfying
\begin{equation}\label{eq:uniform-solution}
 \db_tw=f,
 \qquad
 \|w\|\leq4B\|f\|.
\end{equation}

Finally, let $u\in\Dom T$ and apply
\eqref{eq:uniform-solution} to $f=Tu$. Then $ \db_tu=  \db_tw$ and $\|w\| \leq 4B\| \db_tu\|$. Thus, $u-w\in\ker T=\BB_t$ and hence
\[
 (\Id-P_t)u=(\Id-P_t)w.
\]
The orthogonal projection $\Id-P_t$ is contractive, so
\[
 \|(\Id-P_t)u\|
 \leq\|w\|
 \leq4B\|\db_tu\|.
\]
This proves \eqref{eq:uniform} with $C=4B$. Moreover,
$(\Id-P_t)u$ is the unique solution of
$\db_tz=\db_tu$ orthogonal to $\BB_t$, and therefore the solution of
minimal $L^2$ norm.
\end{proof}

\begin{corollary}\label{cor:injection}
For every $k>0$, there is an injective linear map
\begin{equation}\label{eq:injection}
 \BB_k\longrightarrow\BB_0.
\end{equation}
\end{corollary}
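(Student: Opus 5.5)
The natural candidate is the restriction of the projection $P_0$ to $\BB_k$, namely $u\mapsto P_0u$. This is a linear map $\BB_k\to\BB_0$, so the only thing to prove is injectivity. Proposition~\ref{prop:uniform} at parameter $t=0$ gives
\[
 \|(\Id-P_0)u\|\leq C\|\db_0u\|,\qquad u\in\Dom\db_0 .
\]
Since the domains of $\db_t$ are independent of $t$, every $u\in\BB_k$ lies in $\Dom\db_0$. By \eqref{eq:operator-difference} and $\db_ku=0$,
\[
 \db_0u=\db_0u-\db_ku=-\ii k\,Wu,\qquad \|\db_0u\|\leq Bk\|u\| .
\]
It follows that $\|u-P_0u\|\leq CBk\|u\|$ with $C=4B$. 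Consequently, for $u\in\BB_k$ with $4B^2k<1$, the kernel of $P_0|_{\BB_k}$ is trivial: $P_0u=0$ would force $\|u\|\leq 4B^2k\|u\|$, hence $u=0$.

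For a general $k>0$ this one-step comparison fails, so I would iterate along a chain of parameters. Choose $0=t_0<t_1<\dots<t_N=k$ with step $|t_{j+1}-t_j|\leq\delta'$, where $4B^2\delta'<1$; for instance $\delta'=\delta=1/(4B^2)$ halved. For each $j$ define $\BB_{t_{j+1}}\to\BB_{t_j}$ by $u\mapsto P_{t_j}u$. The same argument applies, now using Proposition~\ref{prop:uniform} at parameter $t_j$ (the constant is uniform in $t$, which is the whole point) together with $\db_{t_j}u=\db_{t_j}u-\db_{t_{j+1}}u$ of norm at most $B\delta'\|u\|$. This yields
\[
 \|u-P_{t_j}u\|\leq 4B^2\delta'\|u\|<\|u\|,
\]
so each of these maps is injective. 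Composing them gives an injective linear map $P_{t_0}P_{t_1}\cdots P_{t_{N-1}}\colon\BB_k\to\BB_0$.

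The main subtlety lies in the bound $\|Wu\|\le B\|u\|$ and in the claim that $P_{t}u$ lies in $\BB_{t}$ while $u$ lies in $\Dom\db_t$. Both follow from \eqref{eq:perturbation} and from the $t$-independence of the domains, so I expect no real obstacle. The key input is precisely the uniformity of the constant in Proposition~\ref{prop:uniform}, which lets the step size be fixed independently of how large $k$ is.
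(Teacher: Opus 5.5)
Your proposal is correct and is essentially the paper's argument. Like the paper, you use Proposition~\ref{prop:uniform} together with \eqref{eq:perturbation} to show that $P_s|_{\BB_t}$ is injective whenever $CB|s-t|<1$, and then compose such projections along a subdivision of $[0,k]$. There is one harmless sign slip: by \eqref{eq:operator-difference}, $\db_0u-\db_ku=\ii k\,Wu$ rather than $-\ii k\,Wu$, which does not affect the norm bound.
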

\begin{proof}
By~\eqref{eq:uniform}--\eqref{eq:perturbation}, if $h\in\BB_t$, then
\begin{equation*}\label{eq:projection}
 \begin{aligned}
 \|(\Id-P_s)h\|&\leq C\|\db_sh\|
 =C\|(\db_s-\db_t)h\|\\
 &\leq CB|s-t|\,\|h\|.
 \end{aligned}
\end{equation*}
If $CB|s-t|<1$, the restriction $P_s|_{\BB_t}$ is injective. Subdividing any finite parameter interval into finitely many sufficiently short subintervals and composing these maps gives an injection between the kernel spaces at its endpoints.
\end{proof}

\begin{proposition}\label{prop:sections}
For every $N\in\Z_{>0}$, there exists $t=t_N>0$ such that
\begin{equation}\label{eq:dimension}
 \dim_{\C}\BB_t\geq N.
\end{equation}
\end{proposition}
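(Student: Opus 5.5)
Pick $N$ distinct points $x_1,\dots,x_N\in M$ and construct, for $t$ large, $N$ elements of $\BB_t$ whose values at these points are linearly independent. This is the standard H\"ormander construction with singular weights, carried out for the positive line bundle $L_t$, where the curvature $t\omega$ provides arbitrarily large positivity.

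\textbf{Local data.} Around each $x_j$ choose a coordinate ball $U_j$ with the $U_j$ pairwise disjoint and relatively compact. On $U_j$ fix a local holomorphic frame $e_t=e^{g_j}e_0$ of $L_t$. Since $\db g_j=\ii t\beta^{0,1}$, we may take $g_j=t\,g_j^{(1)}$, where $g_j^{(1)}$ solves the equation for $t=1$, so the local weight is $\varphi_t=t\varphi^{(1)}_j$ with $\ii\partial\db\varphi_j^{(1)}=\omega$. Let $\chi_j$ be a cutoff equal to $1$ near $x_j$ and supported in $U_j$. Set
\[
 u_k:=\chi_k\,dz^{(k)}\otimes e_t,
\]
where $dz^{(k)}=dz_1\wedge\cdots\wedge dz_n$ in the coordinates of $U_k$. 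Then $u_k\in H^0$, with norm bounded by a constant times $\sup_{U_k}e^{-\varphi_t}$, and $\db_t u_k=\db\chi_k\,dz^{(k)}\otimes e_t$ is supported in the annular region where $\db\chi_k\ne0$.

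\textbf{Correction with a singular weight.} Solve $\db_t w_k=\db_t u_k$ in $L^2$ with the extra weight $e^{-\psi}$, where
\[
 \psi=\sum_j 2n\,\theta_j\log|z-x_j|.
\]
Here $\theta_j$ is a cutoff equal to $1$ near $x_j$, supported inside $\{\chi_j=1\}$-type neighbourhoods so that $\psi$ is smooth on $\operatorname{supp}\db\chi_k$. The weight $e^{-\psi}$ forces $w_k$ to vanish at every $x_j$, since $|z|^{-2n}$ is not integrable. Now $\ii\partial\db\psi\ge -C_0\omega$ for a constant $C_0$ independent of $t$, so for $t\ge 2C_0$ the twisted bundle $L_t$ with metric multiplied by $e^{-\psi}$ has curvature at least $\tfrac t2\omega$. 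Complete-manifold $L^2$ estimates with singular weights then give a solution satisfying
\[
 \int|w_k|^2e^{-\psi}\le \frac{2}{t}\int|\db\chi_k|^2e^{-\varphi_t-\psi}.
\]
This follows from Demailly's version of H\"ormander's theorem on complete K\"ahler manifolds, applied after regularizing $\psi$, as in \eqref{eq:bkn2} but with the modified curvature. The right side is finite, and $\psi$ is bounded above on $M$, so $w_k\in H^0$. Put $h_k:=u_k-w_k\in\BB_t$. By local regularity $w_k$ is holomorphic near each $x_j$ in the frame $e_t$. The non-integrability forces $w_k(x_j)=0$, hence $h_k(x_j)=\delta_{jk}\,dz^{(k)}\otimes e_t(x_j)$. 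The $h_k$ are therefore linearly independent, giving \eqref{eq:dimension} with $t_N=2C_0$.

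\textbf{Main obstacle.} The key step is the weighted $L^2$ existence theorem with the singular weight on the noncompact complete manifold. One must check that the curvature of $\psi$ is bounded below uniformly; this holds since $\psi$ is compactly supported apart from its log poles, which are plurisubharmonic. One must also justify the approximation of $\psi$ by smooth weights decreasing to it, with uniform estimates and a weak limit. The paper's cutoff and density argument for \eqref{eq:bkn2} handles completeness. The singular weight is handled by the standard regularization $\log(|z|^2+\epsilon)$, whose complex Hessian stays bounded below by $-C_0\omega$ uniformly in $\epsilon$. Combined with Corollary~\ref{cor:injection}, this yields $\dim\BB_0\ge N$ for all $N$, which is Theorem~\ref{thm:main}.
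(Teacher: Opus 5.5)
Your argument is correct, and its analytic core is the paper's. You use a cut-off logarithmic weight with curvature bounded below by $-C_0\omega$ and absorb this by taking $t$ large in $L_t$. You then apply Demailly's complete-manifold $L^2$ estimate to the regularized weights built from $\log(|z|^2+\epsilon)$, pass to a weak limit, and use non-integrability of the weight to force the correction to vanish.

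What differs is the data you interpolate.

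\textbf{The paper's route.} It works in one coordinate ball with a single pole of strength $n+N$, i.e.\ singular factor $|z|^{-2(n+N)}$. This kills the whole $N$-jet of the correction at the centre. The sections $\sigma_j-u_j$ are then separated by their jets $1,z_1,\dots,z_1^N$. This gives $N+1$ sections from one ball and one cutoff, with the explicit threshold $t_N=1+(n+N)C_0$, where $C_0$ is independent of $N$.

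\textbf{Your route.} You use $N$ disjoint balls, each carrying a pole of the minimal non-integrable strength $n$. So you only need the simplest vanishing fact: a holomorphic $f$ with $\int|f|^2|z|^{-2n}<\infty$ vanishes at $0$. Linear independence is then immediate from the diagonal evaluation $h_k(x_j)=\delta_{jk}\,dz^{(k)}\otimes e_t(x_j)$. The price is that your $C_0$, and hence $t_N=2C_0$, depends on the chosen configuration of balls and cutoffs. You should also take $C_0>0$ so that $t_N>0$.

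A small advantage of your choice $\operatorname{supp}\theta_k\subset\{\chi_k=1\}^\circ$ is that $\psi_\epsilon$ vanishes identically on $\operatorname{supp}\db\chi_k$. The right-hand side of your estimate is therefore exactly independent of $\epsilon$. In the paper it is only uniformly bounded, because $\db\sigma_j$ and $\phi_\epsilon$ share the annulus $A$.
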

\begin{proof}
Choose a relatively compact coordinate ball
\[
 U=\{z\in\C^n:|z|<r\}\Subset M
\]
such that $\omega\approx\ii\partial\db|z|^2$ on $U$.  According to the analysis in Section \ref{sec:hlb} we know that $L_t$ has a nowhere-vanishing holomorphic frame $e_t$ on a neighborhood of $\overline U$. In this frame, its Hermitian metric $h_t$ is represented by
\[
 |e_t|_{h_t}^2=e^{-\varphi_t},
\]
with $i\partial\bar\partial\varphi_t =t \omega$.
Choose a smooth cutoff function $\chi:\R\to[0,1]$ satisfying $\chi=1$ on $(-\infty,1/2]$ and $\chi=0$ on $[1,\infty)$. For $\epsilon\geq 0$, set
\[
 \phi_\epsilon=\chi(|z|^2/r^2)\log(|z|^2+\epsilon),\qquad
 \psi_{t,\epsilon}=\varphi_t+(n+N)\phi_\epsilon.
\]
Extend $\phi_\epsilon$ by zero outside $U$.  On \(\{|z|^2<r^2/2\}\), the function \(\phi_\epsilon\) is plurisubharmonic. On the fixed compact annulus
\[
A=\{r^2/2\leq |z|^2\leq r^2\}\Subset V,
\]
the function \(\log(|z|^2+\epsilon)\) and its derivatives up to order two are bounded uniformly in \(\epsilon\). Consequently, there exists a constant \(C_0\geq0\), independent of \(\epsilon\), such that
\[
\mathrm i\partial\bar\partial\phi_\epsilon\geq-C_0\omega
\quad\text{on }M.
\]
Choose
\[
t=t_N:=1+(n+N)C_0
\]
and keep this parameter fixed below. The smooth Hermitian metric
\[
h_{t,\epsilon}=h_t e^{-(n+N)\phi_\epsilon}
\]
has Chern curvature satisfying
\[
\mathrm iF_{t,\epsilon}
=
t\omega+(n+N)\mathrm i\partial\bar\partial\phi_\epsilon
\geq\omega.
\]
Equivalently, its local weight
\(\psi_{t,\epsilon}\) satisfies
\(\mathrm i\partial\bar\partial\psi_{t,\epsilon}\geq\omega\). 


Put
\[
 \sigma_j=z_1^j\chi(|z|^2/r^2)\,
 \dd z_1\wedge\cdots\wedge\dd z_n\otimes e_t,
 \qquad j=0,1,\ldots,N,
\]
and extend these sections by zero outside $U$. By~\cite[Theorem~5.1]{Demailly}, the equations
\[
 \db_tu_{j,\epsilon}=\db_t\sigma_j
\]
have solutions on $M$ satisfying
 
\[
\begin{aligned}
\int_M |u_{j,\epsilon}|_{\omega,h_t}^2
e^{-(n+N)\phi_\epsilon}\,dV_\omega
&\leq
\int_M |\db_t\sigma_j|_{\omega,h_t}^2
e^{-(n+N)\phi_\epsilon}\,dV_\omega\\
&\leq C,
\end{aligned}
\]
where $C$ is independent of $\epsilon$. The norm $|\cdot|_{\omega,h_t}$ is the norm on $L_t$-valued $(n,0)$-forms induced by $\omega$ and $h_t$.

Let $u_j$ be a weak $L^2$ limit along a sequence $\epsilon\to0$. Such a subsequence exists because $\phi_\epsilon$ is uniformly bounded above, so the weighted estimates imply a uniform unweighted $L^2$ bound. Then
\[
 \db_tu_j=\db_t\sigma_j
\]
in the sense of distributions, and
\[
 \int_M |u_j|_{\omega,h_t}^2
 e^{-(n+N)\phi_0}\,\dd V_\omega\leq C.
\]
The latter inequality follows by weak lower semicontinuity on compact subsets away from the center, followed by exhaustion.

Since $u_j$ is holomorphic near the center of the coordinate ball, this integrability condition forces all derivatives of its local coefficient of order at most $N$ to vanish there. Indeed, near the center the singular factor is $|z|^{-2(n+N)}$, which implies that 
\[
D^\alpha u_j(0)=0
\qquad\text{for every }|\alpha|\leq N.
\] 
Therefore
\[
 \sigma_j-u_j\in\BB_t,\qquad j=0,1,\ldots,N,
\]
and these sections are linearly independent, as their jets of order at most $N$ are the distinct monomials $1,z_1,\ldots,z_1^N$. Hence $\dim_{\C}\BB_t>N$.
\end{proof}

\begin{proof}[Proof of Theorem~\ref{thm:main}]
Combining~\eqref{eq:injection} with~\eqref{eq:dimension} yields
\[
 \dim_{\C}\BB_0\geq N.
\]
Since $N$ is arbitrary, $\HH^{n,0}_{(2)}(M)=\BB_0$ is infinite-dimensional.
\end{proof}

\bigskip

{\bf Acknowledgements.} The authors are grateful to  ChatGPT 6 Astra for valuable assistance with the exploration of some of the proof strategies and calculations. They  reviewed and verified the material they adopted and take final responsibility for all arguments and conclusions in the paper and for its adherence to standards of academic integrity.

\end{document}